    \def\qed{\hfill$\sqcap\kern-8.0pt\hbox{$\sqcup$}$\\}
    \def\beq{\begin{eqnarray}}
    \def\eeq{\end{eqnarray}}
    \def\beqq{\begin{eqnarray*}}
    \def\eeqq{\end{eqnarray*}}
    \def\re{\textnormal {Re}}
    \def\im{\textnormal {Im}}
    \def\r{{\mathbb R}}
    \def\c{{\mathbb C}}
    \def\d{{\textnormal d}}
    \def\i{{\textnormal i}}
	\newtheorem{lemma}{Lemma}
	\newtheorem{proposition}{Proposition}
\title{Computing the Barnes $G$-function and the gamma function in the entire complex plane}
\author{
Alexey Kuznetsov
\footnote{Dept. of Mathematics and Statistics,  York University,
4700 Keele Street, Toronto, ON, M3J 1P3, Canada. 
 Email:  akuznets@yorku.ca   Webpage: 
\href{https://kuznetsov.mathstats.yorku.ca}{kuznetsov.mathstats.yorku.ca}}
 }
\begin{document}
 
\maketitle

\begin{abstract}
We present an algorithm for generating approximations for the logarithm of Barnes $G$-function in the half-plane $\re(z)\ge 3/2$. These approximations involve only elementary functions and are easy to implement. The algorithm is based on a two-point Pad\'e approximation and we use it to provide two approximations to $\ln(G(z))$, accurate to $3 \times 10^{-16}$ and $3 \times 10^{-31}$ in the half-plane $\re(z)\ge 3/2$; a reflection formula is then used to compute Barnes $G$-function in the entire complex plane. A by-product of our algorithm is that it also produces accurate approximations to the gamma function.
\end{abstract}

\vspace{0.25cm}

{\vskip 0.15cm}
 \noindent {\it Keywords}:  Barnes $G$-function, gamma function, Laplace transform, Pade approximation 
 \\
 \noindent {\it 2010 Mathematics Subject Classification }: Primary 33F05, Secondary 	30E10, 33E30, 33B15


\section{Introduction}\label{section:Introduction}


Barnes $G$-function is an entire function defined via the Weierstrass product
\begin{equation}\label{def:Barnes_G}
G(1+z)=(2\pi)^{z/2} e^{-\frac{1}{2} (z+(1+\gamma)z^2)} 
\prod\limits_{k\ge 1} \Big(1+\frac{z}{k} \Big)^k e^{-z+\frac{z^2}{2k}}, \;\;\; z\in \c. 
\end{equation}
Here $\gamma$ is the Euler-Mascheroni constant. 
This function satisfies
\begin{equation}\label{eq:G_functional_eqn}
G(z+1)=\Gamma(z) G(z).
\end{equation}
and it is known \cite{Vigneras1979} that $G$ is a unique function satisfying \eqref{eq:G_functional_eqn} subject to conditions 
$G(1)=1$ and 
$$
\frac{d^3}{dz^3} \ln G(z)\ge 0 \;\; {\textrm{for }} z>1.
$$

The systematic study of $G(z)$ (and certain related functions, including a more general double gamma function $G(z;\tau)$ and a hierarchy of multiple gamma functions $\Gamma_n(z)$) was initiated by Barnes in a series of papers \cite{Barnes1899,Barnes1901,Barnes1904}. The $G$-function was in fact introduced in  an earlier paper by Alexeiewsky \cite{Alexeiewsky1894} and in some older literature it was sometimes called ``Alexeiewsky function", though by now the name ``Barnes $G$-function" seems to be universally accepted. 

Starting in 1980s the Barnes $G$-function and related double gamma functions started to appear frequently in various areas of Mathematics and Physics. Shintani \cite{Shintani1980} used double gamma function to give a new proof of Kronecker's limit formula. The Barnes $G$-function was found to have connections to determinants of Laplacians  \cite{Sarnak1987,Vardi1988} and it also appeared in the Keating-Snaith 
 conjecture on moments of the Riemann zeta function \cite{Keating2000}.  
 More recently this function has found many aplications in Probability Theory  \cite{Kuznetsov2011,Yor2009,Simon2018}.

Despite this increased interest in the Barnes $G$-function and its apparent usefullness in many applications, no work was done so far on developing efficient numerical algorithms for computing $G(z)$ (though several recent papers studied asymptotic expansions of $G(z)$, see  \cite{Chen2014,Fereira2001,Nemes2014,Xu2017}). It is the purpose of this paper to correct this omission. Our main contribution is an algorithm that can produce accurate and simple approximations to $\ln(G(z))$ in the half-plane $\re(z)\ge 3/2$ -- 
we demonstrate this by providing two such approximations, one valid to $3\times 10^{-16}$ and another to $3 \times 10^{-31}$.
With the help of a reflection formula and the functional equation 
\eqref{eq:G_functional_eqn} this allows one to compute $\ln(G(z))$ in the entire complex plane. 
Our algorithm also generats accurate approximations to the gamma function, though this is probably less important as there already exist excellent approximations to $\Gamma(z)$, such as the Lanczos approximation \cite{Lanczos1964,Pugh2004}.
 
The key part of our algorithm consists in approximating the function 
\begin{equation}\label{def:f(x)}
f(x):=\frac{e^{-x}}{x^3} \left[\frac{1}{2} \coth\left(\frac{x}{2}\right)- \frac{1}{x}-\frac{x}{12} \right], \;\;\; x>0,
\end{equation}
by a finite sum of exponential functions $\sum_{j=1}^p c_j e^{-\lambda_j x}$, and this is done by finding a two-point Pad\'e approximation to the Laplace transform of $f(x)$. This two-point Pad\'e approximation method is of independent interest, as it can be used to provide approximations (by sums of exponential functions) to any function $g$ for which derivatives $g^{(n)}(0)$ and ``moments" $\int_0^{\infty} x^n g(x) \d x$ are known explicitly or can be computed to sufficiently high precision. 

The paper is organized as follows. In Section \ref{Section2} we explain how approximations to $f(x)$ by a finite sum of exponential functions lead to approximations to $\ln(\Gamma(z))$ and $\ln(G(z))$ in the half-plane $\re(z)\ge 3/2$ and how one can extend these approximations to the entire complex plane by using functional equations and reflection formulas. Section \ref{Section3} describes the two-point Pad\'e approximation algorithm for finding exponential sum approximations and in Section \ref{Section4} we present numerical results that illustrate the performance of this algorithm. In Appendix \ref{AppendixA} we describe an algorithm for computing the dilogarithm function (this is needed for implementing the reflection formula for the Barnes $G$-function), and in Appendix \ref{AppendixB} we present the numerical values of the coefficients $c_j$ and $\lambda_j$.

\section{Approximatng logarithms of $\Gamma(z)$ and $G(z)$}\label{Section2}

The starting point for our investigation is the following proposition, which reduces the computation of the gamma function and the Barnes $G$-function to the computation of 
\begin{equation}\label{def_Fz}
F(z):=\int\limits_0^{\infty} e^{-zx } x f(x)  \d x
\end{equation}
and its derivative $F'(z)$. In what follows, $A$ denotes the the Glaisher-Kinkelin constant and $\ln(z)$ denotes the principal branch of the logarithm function. 

\begin{proposition}\label{prop1} For $\re(z)>0$ 
\begin{align}
\label{eq:ln_Gamma}
\ln(\Gamma(z))&=\left(z-\frac12 \right)\ln(z) - z+\frac12 \ln(2\pi)  + \frac{1}{12z}-F'(z-1), \\
\label{eq:ln_G}
\ln(G(z))&=\left(\frac{z^2}{2}-z+\frac{5}{12}\right)\ln(z)
-\frac{3}{4}z^2
+\frac{1}{2}\ln(2\pi)(z-1)+z \\ \nonumber
&+\frac{1}{12}-\ln(A)-\frac{1}{12z}+F(z-1)-(z-1)F'(z-1).
\end{align}
\end{proposition}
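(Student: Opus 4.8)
The plan is to prove the two formulas in tandem: I would first derive \eqref{eq:ln_Gamma} from a Binet-type integral representation, and then bootstrap to \eqref{eq:ln_G} by logarithmic differentiation, using \eqref{eq:ln_Gamma} as an input. The first preparatory step is to record the small-$x$ behaviour of the bracket in \eqref{def:f(x)}. Writing $K(x):=\tfrac12\coth(x/2)-\tfrac1x$, the expansion $\coth(u)=u^{-1}+u/3+O(u^3)$ gives $K(x)=x/12+O(x^3)$, so that $\tilde K(x):=K(x)-x/12=O(x^3)$ and $xf(x)=e^{-x}\tilde K(x)/x^2=O(x)$ near $0$. This guarantees absolute convergence of $F(z)$ and of $F'(z),F''(z)$ for $\re(z)>0$ and licenses differentiation under the integral sign, from which I would record the identities $F'(z-1)=-\int_0^\infty e^{-zx}\tilde K(x)/x\,\d x$ and $F''(z-1)=\int_0^\infty e^{-zx}\tilde K(x)\,\d x$.

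For \eqref{eq:ln_Gamma} I would invoke Binet's second (Malmsten) formula
\[
\ln(\Gamma(z))=\left(z-\tfrac12\right)\ln(z)-z+\tfrac12\ln(2\pi)+\int_0^\infty K(x)\frac{e^{-zx}}{x}\,\d x,\qquad \re(z)>0,
\]
and observe that the integral splits as $\int_0^\infty \tilde K(x)e^{-zx}/x\,\d x+\tfrac1{12}\int_0^\infty e^{-zx}\,\d x=-F'(z-1)+\tfrac1{12z}$, which is exactly \eqref{eq:ln_Gamma}. The subtraction of $x/12$ inside $f$ is precisely what produces the $1/(12z)$ term and simultaneously regularizes the integrand at $0$.

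For \eqref{eq:ln_G} I would first compute the logarithmic derivative of $G$ directly from the Weierstrass product \eqref{def:Barnes_G}: termwise differentiation of $\ln G(1+z)$ together with $\sum_{k\ge1}(1/k-1/(k+z))=\psi(z+1)+\gamma$ gives, after setting $w=1+z$,
\[
(\ln G)'(w)=\tfrac12\ln(2\pi)+\tfrac12-w+(w-1)\psi(w).
\]
Substituting $\psi(w)=\ln w-\tfrac1{2w}-\tfrac1{12w^2}-F''(w-1)$, obtained by differentiating the already-proved \eqref{eq:ln_Gamma}, the elementary terms collapse to
\[
(\ln G)'(w)=\tfrac12\ln(2\pi)-w+(w-1)\ln w+\frac{5}{12w}+\frac{1}{12w^2}-(w-1)F''(w-1).
\]
I would then differentiate the right-hand side $R(w)$ of \eqref{eq:ln_G} and check that it reproduces this expression exactly, the only nonroutine cancellation being $\tfrac{\d}{\d w}[F(w-1)-(w-1)F'(w-1)]=-(w-1)F''(w-1)$. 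Consequently $\ln G(w)-R(w)$ is holomorphic with vanishing derivative on the connected half-plane $\re(w)>0$, hence constant.

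The main obstacle is pinning this constant to zero. I would do so by letting $w\to+\infty$ along the reals. Watson's lemma applied to $F(z)=\int_0^\infty e^{-zx}xf(x)\,\d x$ with $xf(x)=O(x)$ yields $F(z)=O(1/z^2)$, so that $F(w-1)$, $F'(w-1)$ and $(w-1)F'(w-1)$ all tend to $0$; the constant therefore equals $\lim_{w\to\infty}[\ln G(w)-R_{\mathrm{elem}}(w)]$, where $R_{\mathrm{elem}}$ is the elementary part of $R$. Matching this against the classical expansion $\ln G(w+1)=\tfrac{w^2}2\ln w-\tfrac{3w^2}4+\tfrac w2\ln(2\pi)-\tfrac1{12}\ln w+\zeta'(-1)+o(1)$ (after the shift $w\mapsto w-1$ and the expansion $\ln(w-1)=\ln w-1/w+\cdots$) forces the constant term to be $\tfrac1{12}-\ln(A)=\zeta'(-1)$, which is exactly the constant built into $R$, so the limit is $0$. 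I expect the delicate points here to be the careful bookkeeping of the several $O(1)$ and $O(1/w)$ contributions that must cancel in this shifted asymptotic expansion, and the dominated-convergence justification that $F$ and its derivatives decay as claimed.
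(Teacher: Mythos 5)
Your derivation of \eqref{eq:ln_Gamma} is exactly the paper's: Binet's second formula plus the observation that the subtracted $x/12$ in $f$ produces the $1/(12z)$ term, with the remaining integral equal to $-F'(z-1)$. For \eqref{eq:ln_G}, however, you take a genuinely different route. The paper simply sets $N=1$ in equation (12) of Ferreira--L\'opez, which gives $\ln(G(z))=(z-1)\ln(\Gamma(z))-\tfrac12(z^2-z+\tfrac16)\ln(z)+\tfrac{z^2}{4}-\ln(A)+F(z-1)$ directly, and then substitutes \eqref{eq:ln_Gamma}; the whole proof is two lines modulo that citation. You instead compute $(\ln G)'(w)=\tfrac12\ln(2\pi)+\tfrac12-w+(w-1)\psi(w)$ from the Weierstrass product \eqref{def:Barnes_G}, feed in $\psi(w)=\ln w-\tfrac1{2w}-\tfrac1{12w^2}-F''(w-1)$ obtained by differentiating \eqref{eq:ln_Gamma}, verify that this matches $R'(w)$ for the right-hand side $R$ of \eqref{eq:ln_G}, and pin the constant by sending $w\to+\infty$ against Barnes's classical expansion $\ln G(w+1)=\tfrac{w^2}{2}\ln w-\tfrac{3w^2}{4}+\tfrac{w}{2}\ln(2\pi)-\tfrac1{12}\ln w+\zeta'(-1)+o(1)$. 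I checked the computations: the logarithmic derivative of the product, the collapse to $\tfrac12\ln(2\pi)-w+(w-1)\ln w+\tfrac{5}{12w}+\tfrac1{12w^2}-(w-1)F''(w-1)$, the matching with $R'(w)$, the Watson's-lemma decay of $F$, $F'$ and $(w-1)F'(w-1)$, and the shifted asymptotics (where $\zeta'(-1)=\tfrac1{12}-\ln A$ makes the constant vanish) are all correct. The trade-off is clear: the paper's proof is shorter but leans on a specific formula from the literature, while yours is essentially self-contained, replacing that citation by the (equally classical, but more widely known) Barnes asymptotic expansion at the cost of the derivative-plus-constant bookkeeping you anticipate. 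Both are valid; yours has the minor additional burden of fixing a branch of $\ln G$ on the half-plane, which your real-axis limit handles implicitly.
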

\begin{proof}
Formula \eqref{eq:ln_Gamma} is obtained from the well-known Binet's integral representation 
$$
\ln(\Gamma(z))=\left(z-\frac{1}{2}\right)\ln(z)-z+\frac{1}{2} \ln(2\pi) + \int_0^{\infty} \frac{e^{-zx}}{x} \left[\frac{1}{e^{x}-1}
-\frac{1}{x}+\frac{1}{2}\right] \d x, 
$$
 by noting that $f$ can be expressed in an equivalent form
\begin{equation}\label{eq:f(x)_2nd}
f(x)=
\frac{e^{-x}}{x^3} \left[\frac{1}{e^x-1} -\frac{1}{x} +\frac{1}{2}-\frac{x}{12} \right].
\end{equation}
Next, by setting $N=1$ in
\cite{Fereira2001}[Equation (12)] we obtain 
$$
\ln(G(z))=(z-1)\ln(\Gamma(z)) - \frac12 \left(z^2-z+\frac16\right)\ln(z)
+\frac{z^2}4-\ln(A)+F(z-1), 
$$
and this result, together with \eqref{eq:ln_Gamma}, implies \eqref{eq:ln_G}. 
\end{proof}

Next we describe our approximation algorithm. 

\subsection*{Approximations in the domain $\re(z)\ge 3/2$}

Let $p$ be a positive integer. Assume we have found  
 coefficients $\{c_j\}_{1\le j \le p}$ and 
$\{\lambda_j\}_{1\le j \le p}$  such that $\re(\lambda_j)>0$ and  
\begin{equation}\label{def_phi}
\phi(x):=\sum\limits_{j=1}^p c_j e^{-\lambda_j x}
\end{equation}
 is a ``good" approximation to $f(x)$ on $x\in (0,\infty)$, in the sense that the numbers
\begin{align}\label{eq:f_sum_error1}
 \epsilon_1&:= 2\sup\limits_{x>0}  \Big \vert x^2( f(x)-\phi(x)) \Big \vert, \\
 \label{eq:f_sum_error2}
  \epsilon_2&:= 2\sup\limits_{x>0} 
  \Big \vert 3x(f(x)-\phi(x))+x^2(f'(x)- \phi'(x)) \Big \vert, 
 \end{align}
are small. 
For $\re(z)>0$ we define a rational function
 \begin{equation}\label{def:Phi}
 \Phi(z):=
  \sum\limits_{j=1}^p \frac{c_j}{(z+\lambda_j)^{2}} =\int_0^{\infty} e^{-z x} x \phi(x) \d x
 \end{equation}
and we define approximations to $\ln(\Gamma(z))$ and $\ln(G(z))$ in the form
\begin{align}
\label{eq:ln_Gamma_approx}
\ln(\widehat \Gamma(z))&=\left(z-\frac12 \right)\ln(z) - z+\frac12 \ln(2\pi)  + \frac{1}{12z}-\Phi'(z-1), \\
\label{eq:ln_G_approx}
\ln(\widehat G(z))&=\left(\frac{z^2}{2}-z+\frac{5}{12}\right)\ln(z)
-\frac{3}{4}z^2
+\frac{1}{2}\ln(2\pi)(z-1)+z \\ \nonumber
&+\frac{1}{12}-\ln(A)-\frac{1}{12z}+\Phi(z-1)-(z-1)\Phi'(z-1).
\end{align}

 \begin{proposition}
 In the half-plane $\re(z)\ge 3/2$ we have 
 \begin{equation}\label{ln_gamma_estimate}
 \vert \ln(\Gamma(z))-\ln(\widehat \Gamma(z)) \vert \le  \epsilon_1
 \end{equation}
 and 
  \begin{equation}\label{ln_G_estimate}
 \vert \ln(G(z))-\ln(\widehat G(z)) \vert \le  \epsilon_2. 
 \end{equation}
 \end{proposition}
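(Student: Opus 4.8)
The plan is to subtract the exact representations of Proposition~\ref{prop1} from the definitions \eqref{eq:ln_Gamma_approx}--\eqref{eq:ln_G_approx} of the approximants. Every term in these formulas is identical except for those involving $F,F'$ against $\Phi,\Phi'$, so all elementary pieces cancel and I am left with the clean identities
\begin{align*}
\ln(\Gamma(z))-\ln(\widehat\Gamma(z))&=\Phi'(z-1)-F'(z-1),\\
\ln(G(z))-\ln(\widehat G(z))&=\big[F(z-1)-\Phi(z-1)\big]-(z-1)\big[F'(z-1)-\Phi'(z-1)\big].
\end{align*}
Writing $g:=f-\phi$ and using the Laplace-transform representations \eqref{def_Fz} and \eqref{def:Phi}, both right-hand sides become integrals against $e^{-(z-1)x}$ of expressions built from $g$ and $g'$; differentiating under the integral sign (legitimate for $\re(z-1)>0$ because of the exponential decay of the integrands) gives $F'(w)-\Phi'(w)=-\int_0^\infty e^{-wx}x^2 g(x)\,\d x$.

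First I would dispose of the gamma estimate \eqref{ln_gamma_estimate}. The identity above becomes $\ln(\Gamma(z))-\ln(\widehat\Gamma(z))=\int_0^\infty e^{-(z-1)x}x^2 g(x)\,\d x$. Since $\re(z)\ge 3/2$ forces $\re(z-1)\ge 1/2$, we have $|e^{-(z-1)x}|=e^{-\re(z-1)x}\le e^{-x/2}$ for all $x>0$; pulling out $\sup_{x>0}|x^2 g(x)|$ and using $\int_0^\infty e^{-x/2}\,\d x=2$ produces exactly $\epsilon_1$. Note that the factor of $2$ in the definition \eqref{eq:f_sum_error1} is precisely this integral, which is why the bound appears in this normalization.

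The $G$ estimate \eqref{ln_G_estimate} is where the real work lies. With $w=z-1$ the difference equals $\int_0^\infty e^{-wx}x g(x)\,\d x + w\int_0^\infty e^{-wx}x^2 g(x)\,\d x$. I would rewrite the second integral by integration by parts: since $w e^{-wx}=-\tfrac{\d}{\d x}e^{-wx}$, transferring the derivative onto $x^2 g(x)$ yields $\int_0^\infty e^{-wx}\big(2xg(x)+x^2 g'(x)\big)\,\d x$ plus a boundary term, and adding the first integral collapses everything to
$$
\ln(G(z))-\ln(\widehat G(z))=\int_0^\infty e^{-wx}\big(3xg(x)+x^2 g'(x)\big)\,\d x,
$$
which the same $e^{-x/2}$ argument bounds by $\sup_{x>0}|3xg(x)+x^2 g'(x)|\cdot 2=\epsilon_2$. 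The main obstacle is justifying this integration by parts, namely that the boundary term $\big[e^{-wx}x^2 g(x)\big]_0^\infty$ vanishes: at $x=0$ this needs $x^2 g(x)\to 0$, which holds because $f$ has a finite limit at the origin (visible from \eqref{eq:f(x)_2nd}) and $\phi$ is a bounded exponential sum, while at $x=\infty$ it follows from $\re(w)>0$ together with the decay of $f$ and $\phi$. I would also record that these same decay and boundedness properties are what justify differentiating $F$ and $\Phi$ under the integral sign throughout the region $\re(w)>0$.
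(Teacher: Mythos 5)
Your proposal is correct and follows essentially the same route as the paper: subtract the exact and approximate formulas to isolate the $F,\Phi$ terms, write the differences as Laplace-type integrals of $g=f-\phi$, integrate by parts to convert $w\int e^{-wx}x^2g\,\d x$ into $\int e^{-wx}(2xg+x^2g')\,\d x$, and bound using $|e^{-wx}|\le e^{-x/2}$ together with $\int_0^\infty e^{-x/2}\,\d x=2$. Your additional remarks justifying the vanishing of the boundary term and the differentiation under the integral sign are correct details that the paper leaves implicit.
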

 \begin{proof} 
 From \eqref{eq:ln_Gamma} and \eqref{eq:ln_Gamma_approx} we see that 
 $$
 \ln(\Gamma(z))-\ln(\widehat \Gamma(z))=\Phi'(z-1)-F'(z-1).
 $$
 We set $w=z-1$ and estimate for $\re(w) \ge 1/2$  
 $$
 \vert F'(w)- \Phi'(w)\vert = \Big \vert \int_0^{\infty} e^{-wx} x^2 (f(x)-\phi(x))\d x \Big \vert 
 \le \int_0^{\infty} e^{-x/2} x^2 \vert f(x)-\phi(x)\vert \d x \le  
 \frac{\epsilon_1}{2} \int_0^{\infty} e^{-x/2}\d x=\epsilon_1,
 $$
 which proves \eqref{ln_gamma_estimate}. Similarly, from \eqref{eq:ln_G} and \eqref{eq:ln_G_approx} we see that 
 $$
 \ln(G(z))-\ln(\widehat G(z)) = F(z-1)-(z-1) F'(z-1) - \Phi(z-1) + (z-1) \Phi'(z-1).
 $$ 
 We perform integration by parts 
 $$
 w F'(w)=-w\int_0^{\infty} e^{-w x} x^2 f(x) \d x=-\int_0^{\infty} e^{- w x}(2 x f(x)+x^2 f'(x)) \d x
 $$
 and estimate for $\re(w)\ge 1/2$ 
 \begin{align*}
 \vert F(w)- w F'(w) - \Phi(w) + w \Phi'(w) \vert 
 & = \Big \vert 
 \int_0^{\infty} e^{-w x} (3 x (f(x)-\phi(x))+x^2 (f'(x)-\phi'(x)) ) \d x \Big \vert \\
 &\le \int_0^{\infty} e^{-x/2}  \vert 3 x (f(x)-\phi(x))+x^2 (f'(x)-\phi'(x)) \vert  \d x \\
 & \le \frac{ \epsilon_2}{2} \int_0^{\infty} e^{-x/2}\d x=\epsilon_2. 
 \end{align*}
 \end{proof}

\subsection*{Extending the domain to $\re(z)\ge 1/2$}

 In the vertical strip $1/2 \le \re(z) < 3/2$ we compute $\ln(\Gamma(z))$ 
 via  functional equation 
 $$
 \ln(\Gamma(z))=\ln(\Gamma(z+1))-\ln(z),
$$ 
  and then we compute 
 $\ln(G(z))$ via  
$$
 \ln(G(z))=\ln(G(z+1))-\ln(\Gamma(z)).
 $$

 \subsection*{Approximations in the domain $\re(z)<1/2$: applying reflection formulas} 
 
 It is well-known that Gamma function satisfies the reflection identity 
 \begin{equation}\label{eqn:Gamma_reflection1}
 \Gamma(z) \Gamma(1-z) = \frac{\pi}{\sin(\pi z)}. 
 \end{equation}
 In the previous sections we described an algorithm for computing $\ln(\Gamma(z))$ for $\re(z)\ge 1/2$ and the above identity gives a simple algorithm for computing $\Gamma(z)$ for $\re(z)<1/2$. However, some additional work is needed if we want to compute $\ln(\Gamma(z))$ for $\re(z)<1/2$ while preserving analyticity of this function in $\c \setminus (-\infty, 0]$. First of all, we rewrite the above reflection identity in the form
 $$
 \Gamma(z)=\frac{1}{\Gamma(1-z)} \times \frac{-2\pi \i e^{\pi \i z}}{1-e^{2\pi \i z}}. 
 $$
 We assume that $\im(z)>0$ and take logarithms of both sides of the above equation and  obtain 
 \begin{equation}\label{eqn:Gamma_reflection2}
 \ln(\Gamma(z))=
 -\ln(\Gamma(1-z))+\ln(2\pi)-\frac{\pi \i}{2}+\pi \i z- \ln(1-e^{2\pi \i z}).
 \end{equation}
 One can check that we chose the correct branch of logarithm by noting that the above equation is valid if $\re(z)=1/2$ and $\im(z) \to 0^+$. The right-hand side of \eqref{eqn:Gamma_reflection2} is analytic in $\im(z)>0$, thus we can use this identity to compute $\ln(\Gamma(z))$ for $\re(z)<1/2$ and $\im(z)>0$. This identity is also used for $0<z<1/2$ (again, we need to use the principal branch of the logarithm).  When $\re(z)<1/2$ and $\im(z)<0$ we use the above method coupled with the identity $\overline{\ln(\Gamma(z))}=\ln(\Gamma(\bar z))$. 
 
Computing $\ln(G(z))$ in the half-plane $\re(z)<1/2$ follows a similar procedure. 
In this case the relevant reflection formula is 
$$
\frac{G(1+z)}{G(1-z)} = (2\pi)^z \exp\Big( - \int_0^z \pi t \cot(\pi t) \d t \Big).
$$
Using formulas (4.6), (8.43) in \cite{Lewin} the integral in the right-hand side of the above identity can be expressed in terms of the dilogarithm function 
\begin{equation}\label{eq:def_Li2}
{\textrm{Li}}_2(z):=-\int_0^z \ln(1-t) \frac{\d t}{t}=\sum\limits_{k\ge 1} \frac{z^2}{k^2}, \;\;\; |z|\le 1, 
\end{equation} 
so that changing the variable $z\mapsto z-1$ we would obtain a reflection formula in the form 
\begin{align}\label{eq:lnG_reflection1}
\ln(G(z))&=
\ln(G(2-z))+(z-1) \ln(2\pi) +\frac{\pi \i}{2} (z^2-2z+5/6) 
\\ \nonumber & - (z-1) \ln(1-e^{2\pi \i z})- \frac{1}{2\pi \i} {\textrm{Li}}_2(e^{2\pi \i z}), \;\;\; \im(z)>0.
\end{align}
Again, one can check that we have the correct branch of the logarithm by taking $\re(z)=1$ and $\im(z) \to 0^+$ and using the fact that $G(1)=1$ and  
${\textrm{Li}}_2(1)=\pi^2/6$. 
We note that the right-hand side of \eqref{eq:lnG_reflection1} is analytic in the half-plane $\im(z)>0$, thus we can use \eqref{eq:lnG_reflection1} 
to compute $\ln(G(z))$ for $\re(z)<1/2$ and $\im(z)>0$. An efficient algorithm for computing the dilogarithm function ${\textrm{Li}}_2(z)$ for $|z|\le 1$ is presented in the Appendix \ref{AppendixA}. We also use formula \eqref{eq:lnG_reflection1} when $0<z<1/2$. When $\re(z)<1/2$ and $\im(z)<0$ we use the above method coupled with the identity $\overline{\ln(G(z))}=\ln(G(\bar z))$.

\vspace{0.25cm}

This concludes our description of the method for computing $\ln(\Gamma(z))$ and $\ln(G(z))$ in $\c\setminus (-\infty, 0]$. The only outstanding question is how to approximate the function $f$  defined in \eqref{def:f(x)} by a finite sum of exponential functions of the form \eqref{def_phi} so that the errors $\epsilon_1$ and $\epsilon_2$ in \eqref{eq:f_sum_error1} and \eqref{eq:f_sum_error2} are small. We propose a solution to this problem in the next section.

\section{Approximating a function by a finite sum of exponential functions}\label{Section3}

There exist a number of methods for approximating a given function by a finite sum of exponential functions. In \cite{Rice1962} the problem of finding the best approximation on a finite interval in the Chebyshev sense is considered. A method based on Hankel matrices is introduced in \cite{Beylkin2005} and a Poisson summation approach is presented in \cite{Beylkin2010}. See also \cite{Braess2005} for results on approximating $1/x$ on $[1,\infty)$. In this paper we propose a different approach for approximating functions by  finite sums of exponential functions: this method is based on a two-point Pad\'e approximation and it is particularly well-suited for the function $f$ given in \eqref{def:f(x)}. 

First we show that for the function $f$ defined by \eqref{def:f(x)} one can compute explicitly (in terms of values of the Riemann zeta function $\zeta(s)$ at positive integers and some other well-known constants) both the derivatives at zero and the moments.  

\begin{lemma}\label{Lemma1} ${}$
\begin{itemize}
\item[(i)] For $n\ge 0$ 
 \begin{equation}\label{eq:f_derivatives}
 f^{(n)}(0)=(-1)^n n! \sum\limits_{k=0}^{[n/2]} \frac{B_{2k+4}}{(n-2k)!(2k+4)!}, 
 \end{equation}
 where $B_n$ are Bernoulli numbers. 
 \item[(ii)] Let $\mu_n:=\int_0^{\infty} x^n f(x) \d x$. Then  
 \begin{align*}
\mu_0&=\frac{1}{2} \zeta'(-2)+\frac{1}{72}=-\frac{\zeta(3)}{8 \pi^2}+\frac{1}{72}, \\
\mu_1&=-\zeta'(-1)-\frac{1}{6}=\ln(A)-\frac{1}{4}, \\
\mu_2&=\zeta'(0)+\frac{11}{12}=-\ln(2\pi)/2+\frac{11}{12}, \\
\mu_3&=\lim\limits_{s\to 1} \Big(\zeta(s)-\frac{1}{s-1} \Big)-\frac{7}{12}=\gamma-7/12, 
\end{align*} 
and for $n\ge 4$
\begin{equation*}
\mu_n=(n-3)! \Big[ \zeta(n-2)-\frac{1}{n-3}-\frac{1}{2}
-\frac{n-2}{12} \Big].
\end{equation*}
Here $\gamma$ is the Euler-Mascheroni constant and $A$ is the Glaisher-Kinkelin constant.
\end{itemize}
\end{lemma}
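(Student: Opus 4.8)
The plan is to derive both parts from the second representation \eqref{eq:f(x)_2nd} of $f$ together with the Bernoulli expansion $1/(e^x-1)=\sum_{n\ge0}B_n x^{n-1}/n!$. For part (i) I would first localize $f$ at the origin: substituting this expansion into the bracket of \eqref{eq:f(x)_2nd} and using $B_0=1$, $B_1=-1/2$, $B_2=1/6$, $B_3=0$, the terms $1/x$, $-1/2$ and $x/12$ cancel exactly, leaving
$$
\frac{1}{e^x-1}-\frac1x+\frac12-\frac{x}{12}=\sum_{k\ge0}\frac{B_{2k+4}}{(2k+4)!}\,x^{2k+3},
$$
where only even-indexed Bernoulli numbers survive since $B_{2k+1}=0$ for $k\ge1$. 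Dividing by $x^3$ and reinstating the factor $e^{-x}$ gives $f(x)=e^{-x}\sum_{k\ge0}B_{2k+4}x^{2k}/(2k+4)!$. I would then read off $f^{(n)}(0)/n!$ as the coefficient of $x^n$ in the Cauchy product of this even series with $e^{-x}=\sum_j(-1)^j x^j/j!$; since $(-1)^{n-2k}=(-1)^n$, the formula \eqref{eq:f_derivatives} drops out. This part is pure bookkeeping.

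For part (ii) I would work with the Mellin transform $M(s):=\int_0^\infty x^{s-1}f(x)\,\d x$, so that $\mu_n=M(n+1)$. Because the bracket in \eqref{eq:f(x)_2nd} is $O(x^3)$ at the origin (cancelling the $x^{-3}$) while $e^{-x}$ forces decay at infinity, $M(s)$ is analytic for $\re(s)>0$ and all the $\mu_n$ are well defined. To evaluate it, split the bracket into its four summands and use $e^{-x}/(e^x-1)=\sum_{m\ge2}e^{-mx}$ together with $\int_0^\infty x^{s-1}e^{-mx}\,\d x=\Gamma(s)m^{-s}$. For $\re(s)>4$ every piece converges separately and
$$
M(s)=\Gamma(s-3)\big(\zeta(s-3)-1\big)-\Gamma(s-4)+\tfrac12\Gamma(s-3)-\tfrac1{12}\Gamma(s-2).
$$
The right-hand side is meromorphic, so by uniqueness of analytic continuation it equals $M(s)$ throughout $\re(s)>0$; as $M$ is analytic there, its apparent poles at $s=1,2,3,4$ are removable. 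Putting $s=n+1$ with $n\ge4$ and writing $\Gamma(s-3)=(n-3)!$, $\Gamma(s-4)=(n-4)!$, $\Gamma(s-2)=(n-2)!$ yields, after factoring out $(n-3)!$, the claimed closed form for $\mu_n$.

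The only genuine obstacle is the four exceptional values $\mu_0,\mu_1,\mu_2,\mu_3$ (i.e.\ $s=1,2,3,4$), where the Gamma factors, and at $s=4$ also $\zeta(s-3)$, are singular. Here I would set $s=n+1+\epsilon$ and expand every summand to order $\epsilon^0$, using $\Gamma(\epsilon)=\epsilon^{-1}-\gamma+O(\epsilon)$ and the analogous Laurent expansions of $\Gamma$ at $-1,-2,-3$, the values $\zeta(0)=-\tfrac12$, $\zeta(-1)=-\tfrac1{12}$, $\zeta(-2)=0$, the derivatives $\zeta'(0),\zeta'(-1),\zeta'(-2)$, and $\lim\limits_{s\to1}(\zeta(s)-1/(s-1))=\gamma$. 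In each case the $\epsilon^{-1}$ terms cancel (confirming removability) and the surviving constant term produces the stated value; the explicit evaluations then follow from the standard identities $\zeta'(0)=-\tfrac12\ln(2\pi)$, $\zeta'(-1)=\tfrac1{12}-\ln A$ and $\zeta'(-2)=-\zeta(3)/(4\pi^2)$. The one delicate point is tracking the several $\gamma$-contributions so that they cancel, which is exactly what forces each $\mu_n$ to come out finite.
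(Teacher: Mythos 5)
Your proposal is correct and follows essentially the same route as the paper: part (i) via the Bernoulli-number expansion at the origin (the paper uses the equivalent $\coth$ series), and part (ii) via the Mellin transform of $f$ expressed through $\Gamma$ and $\zeta$, continued analytically and then evaluated at the integers. The only real difference is in handling the exceptional values $\mu_0,\dots,\mu_3$: the paper first combines the four terms into the single product $\Gamma(s)\bigl[\zeta(s)-\tfrac{1}{s-1}-\tfrac12-\tfrac{s}{12}\bigr]$ (with $s=n-2$), whose bracket visibly vanishes at $s=0,-1,-2$, so each limit reduces to the residue of $\Gamma$ times the derivative of the bracket with no $\gamma$-bookkeeping, whereas you Laurent-expand each Gamma and zeta factor separately and verify the pole and $\gamma$ cancellations by hand --- more laborious but equally valid.
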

\begin{proof}
Formula \eqref{eq:f_derivatives} follows from the Taylor series expansion of the hyperbolic cotangent function: 
$$
 \frac{1}{2} \coth\left(\frac{x}{2}\right)=
 \sum\limits_{n=0}^{\infty} \frac{B_{2n}}{(2n)!} x^{2n-1}. 
 $$
Formulas for moments $\mu_n$ follow from the identity  
\begin{equation}\label{eq:f_Mellin}
\int_0^{\infty} x^{s+2} f(x) \d x=
\Gamma(s) \Big[ \zeta(s)-\frac{1}{s-1}-\frac{1}{2}
-\frac{s}{12} \Big],\;\;\; \re(s)>-3,
\end{equation} 
which can be obtained from \eqref{eq:f(x)_2nd} and integral representations 
\begin{align*}
&\int_0^{\infty} \frac{e^{-x} x^{s-1}}{e^x-1} \d x=\Gamma(s)(\zeta(s)-1), \;\;\; \re(s)>1, \\
&\int_0^{\infty} e^{-x} x^{s-1} \d x=\Gamma(s), \;\;\; \re(s)>0. 
\end{align*}
The above two identities can be used to deduce \eqref{eq:f_Mellin} in the domain $\re(s)>1$ and extension of \eqref{eq:f_Mellin} to the half-plane $\re(s)>-3$ is done by analytic continuation (note that both sides of \eqref{eq:f_Mellin} are well-defined and analytic in $\re(s)>-3$). Now we compute 
\begin{align*}
\mu_0=\int_0^{\infty} f(x) \d x&=\lim\limits_{s\to -2} 
\Gamma(s) \Big[ \zeta(s)-\frac{1}{s-1}-\frac{1}{2}
-\frac{s}{12} \Big]\\
&=\lim\limits_{s\to -2} 
(s-2) \Gamma(s) \times \frac{\zeta(s)-\frac{1}{s-1}-\frac{1}{2}
-\frac{s}{12}}{s-2}\\
&=\frac{1}{2} \frac{\d }{\d s} \Big[ \zeta(s)-\frac{1}{s-1}-\frac{1}{2}
-\frac{s}{12} \Big] \bigg \vert_{s=-2}=\frac{1}{2} \zeta'(-2)+\frac{1}{72},
\end{align*}
and the second expression for $\mu_0$ is obtained from the reflection formula: 
$\zeta'(-2)=-\zeta(3)/(4\pi^2)$. The proof of formulas for $\mu_1$, $\mu_2$ and $\mu_3$ is identical and we leave all details to the reader. Formulas for $\mu_n$ with $n\ge 4$ follow directly from \eqref{eq:f_Mellin}. 
\end{proof}

Now we describe our method for finding approximations to $f(x)$ in the form
$\phi(x)=\sum\limits_{j=1}^p c_j e^{-\lambda_j x}$. We fix two non-negative integers $N_1$ and $N_2$ such that $N_1+N_2=2p$ and we aim to find coefficients 
$\{c_j\}_{1\le j \le p}$ and $\{\lambda_j\}_{1\le j \le p}$ such that the functions $f$ and $\phi$ have the  same first $N_1$ moments and the same first $N_2$ terms in Taylor series expansion at zero. In other words, we want to enforce the following  conditions: 
\begin{align}\label{eq:tilde_f_moments}
&\int_0^{\infty} x^n \phi(x) \d x=
n! \sum\limits_{j=1}^p c_j \lambda_j^{-n-1}=\mu_n, \;\;\; n=0,1,2,\dots,N_1-1, \\
\label{eq:tilde_f_derivatives}
&\phi^{(n)}(0)=(-1)^n \sum\limits_{j=1}^p c_j \lambda_j^n=f^{(n)}(0), \;\;\;
n=0,1,2,\dots,N_2-1.
\end{align}

As we will show next, this problem is equivalent to a two-point Pad\'e approximation, with the two points being $z=0$ and $z=\infty$. Indeed, let us consider the Laplace transform 
\begin{equation}\label{def:tilde_F}
\Psi(z):=\int_0^{\infty} e^{-zx} \phi(x) \d x=\sum\limits_{j=1}^p \frac{c_j}{z+\lambda_j}. 
\end{equation}
From the above partial fraction decomposition it is easy to obtain the following series representations
\begin{align*}
\Psi(z)&=\sum\limits_{n\ge 0} z^{-n-1} \Big[ (-1)^n \sum_{j=1}^p c_j \lambda_j^n \Big], \\
\Psi(z)&=\sum\limits_{n\ge 0} z^{n} \Big[ (-1)^n \sum_{j=1}^p c_j \lambda_j^{-n-1} \Big],
\end{align*}
where the first one converges for all $z$ large enough and the second for all $z$ in a neighbourhood of zero. In light of the above equations we see that the problem of finding a function $\phi(x)=\sum\limits_{j=1}^p c_j e^{-\lambda_j x}$ that satisfies conditions 
\eqref{eq:tilde_f_moments} and \eqref{eq:tilde_f_derivatives} is equivalent to the following problem: find the coefficients $\{a_j\}_{1\le j \le p}$ and 
$\{b_j\}_{1\le j \le p}$ of the rational function 
\begin{equation}\label{eq:tilde_F_rational}
\Psi(z)=\frac{P(z)}{Q(z)}=\frac{a_0+a_1z+\dots+a_{p-1} z^{p-1}}{b_0+b_1z+\dots+b_{p-1}z^{p-1} +z^p} 
\end{equation}
such that 
\begin{align}\label{F_asymptotics_0}
\Psi(z)&=\beta_0+\beta_1 z +\beta_2 z^2 + \dots + \beta_{N_1-1} z^{N_1-1} + O(z^{N_1}), \;\;\; z\to 0,\\
\label{F_asymptotics_infty}
\Psi(z)&=\alpha_0 z^{-1} + \alpha_1 z^{-2} + \dots \alpha_{N_2-1} z^{-N_2} + O(z^{-N_2-1}), \;\;\; z\to \infty,
\end{align}
where we denoted
\begin{equation}\label{eq:alpha_and_beta}
\alpha_n:=f^{(n)}(0), \;\;\; \beta_n:=\frac{(-1)^n}{n!} \mu_n.
\end{equation} 
This is precisely the two-point Pad\'e approximation problem. 
As in the case of the classical (one-point) Pad\'e approximation, the above problem can be reduced to solving a system of linear equations. 
First we rewrite equation \eqref{F_asymptotics_0} in the form
\begin{align*}
a_0+a_1 z + \dots + a_{p-1} z^{p-1} &= \Big( b_0+b_1z+\dots+b_{p-1}z^{p-1} +z^p \Big) 
\\ &\times \Big( \beta_0+\beta_1 z +\beta_2 z^2 + \dots + \beta_{N_1-1} z^{N_1-1} + O(z^{N_1})\Big).
\end{align*}
Comparing the coefficients in front of $z^k$ with $k=0,1,\dots,N_1-1$ we obtain equations 
\begin{equation}\label{linear_equations1}
a_k {\mathbf 1}_{\{k\le p-1\}}=\beta_{k-p} {\mathbf 1}_{\{p\le k\}} + \sum_{j=0}^{p-1} 
 b_j \beta_{k-j} {\mathbf 1}_{\{0\le k-j \le N_1-1\}}. 
 \end{equation}
Next we introduce a variable $u=1/z$ and rewrite \eqref{F_asymptotics_infty} in the form
\begin{align*}
a_{p-1}+a_{p-2}u+\dots+a_0 u^{p-1} &= \Big( 1+b_{p-1} u + \dots + b_0 u^p \Big) \\
& \times \Big( \alpha_0 + \alpha_1 u + \dots + \alpha_{N_2-1} u^{N_2-1} + O(u^{N_2}) \Big). 
\end{align*}
Comparing the coefficients in front of $u^k$ with $k=0,1,\dots,N_2-1$ we obtain equations
\begin{equation}\label{linear_equations2}
a_{p-1-k} {\mathbf 1}_{\{k \le p-1\}} = \alpha_k  + \sum\limits_{j=0}^{p-1} b_{j} \alpha_{k+j-p} 
{\mathbf 1}_{\{0 \le k+j-p \le N_2-1\}}. 
\end{equation}
Now we have $N_1$ linear equations of the form \eqref{linear_equations1} and $N_2$ linear equations of the form \eqref{linear_equations2} -- together these give us $N_1+N_2=2p$ equations for $2p$ unknowns $\{a_j, b_j\}_{0\le j \le p-1}$. To rewrite these equations in a matrix form (that is more suitable for numerical computation), 
we introduce a $2p \times 2p$ matrix ${\bf A}=\{A_{i,j}\}_{1\le i,j \le 2p}$ and a column vector $\vec{\bf y}\in \r^{2p}$ by specifying their elements as follows: for $k=0,1,\dots,N_1-1$ and $j=0,1,\dots,p-1$ 
\begin{align*}
A_{k+1,k+1}&={\mathbf 1}_{\{k\le p-1\}},\\
A_{k+1,p+1+j}&= -\beta_{k-j} {\mathbf 1}_{\{0 \le k-j \le N_1-1\}},\\
y_{k+1}&=\beta_{k-p} {\mathbf 1}_{\{p \le k \}}, 
\end{align*}
and for $l=0,1,\dots,N_2-1$ and $j=0,1,\dots,p-1$ 
\begin{align*}
A_{N_1+l+1,p-l}&={\mathbf 1}_{\{l\le p-1\}},\\
A_{N_1+l+1,p+1+j}&= -\alpha_{l+j-p} {\mathbf 1}_{\{0 \le l+j-p \le N_2-1\}},\\
y_{N_1+l+1}&=\alpha_l.  
\end{align*}
Now the two sets of linear equations \eqref{linear_equations1}  and
\eqref{linear_equations2} can be represented as a single matrix equation ${\bf A} \vec{\bf x}=\vec{\bf y}$, where 
a column vector $\vec{\bf x}\in \r^{2p}$ is defined as 
\begin{equation}
\vec{\bf x}:=[a_0,a_1,\dots,a_{p-1},b_0,b_1,\dots,b_{p-1}]^T. 
\end{equation}
After we solve equation ${\bf A} \vec{\bf x}=\vec{\bf y}$ and find $\vec{\bf x}$, we compute $a_j=x_{j+1}$ and $b_j=x_{p+j+1}$ for $j=0,1,\dots,p-1$, and this gives us the desired rational function $\Psi$ in the form  \eqref{eq:tilde_F_rational}. Performing the partial fraction decomposition (and assuming that all the roots of the denominator $Q(z)$ are simple) we would obtain the expression of $\phi$ in the form 
\eqref{def:tilde_F} and this would give us the desired coefficients 
 $\{c_j\}_{1\le j \le p}$ and $\{\lambda_j\}_{1\le j \le p}$ 
of $\phi(x)=\sum\limits_{j=1}^p c_j e^{-\lambda_j x}$. We can check the accuracy of computation of these coefficients via \eqref{eq:tilde_f_moments} and \eqref{eq:tilde_f_derivatives}.

\vspace{0.25cm}
\noindent
{\bf Remark 1:}  
Note that the functions $\Phi$ and $\Psi$ defined in \eqref{def:Phi} and \eqref{def:tilde_F} satisfy $\Phi(z)=-\Psi'(z)$. Thus the rational function $\Phi$ has the same $N_1-1$ first terms of Taylor expansion at zero and the same first $N_2+1$ asymptotic terms as $z\to +\infty$ as the function $F(z)$ . Therefore our approximations $\ln(\widehat \Gamma(z))$ and $\ln(\widehat G(z))$, which were introduced in \eqref{ln_gamma_estimate} and \eqref{ln_G_estimate}, satisfy the following properties: 
as $z\to 1^+$ 
\begin{align*}
&\ln(\Gamma(z))-\ln(\widehat \Gamma(z))=O((z-1)^{N_1-1}), \\
&\ln(G(z))-\ln(\widehat G(z))=O((z-1)^{N_1-1}), 
\end{align*} 
and as $z\to \infty$ 
\begin{align*}
&\ln(\Gamma(z))-\ln(\widehat \Gamma(z))=O(z^{-N_2-2}), \\
&\ln(G(z))-\ln(\widehat G(z))=O(z^{-N_2-2}).  
\end{align*} 
The first of these is probably not very informative, as we only use approximations 
$\ln(\widehat \Gamma(z))$ and $\ln(\widehat G(z))$ in the half-plane $\re(z)\ge 3/2$ (where $z$ is bounded away from $1$), but the second result is useful as it tells us that our approximations in the half-plane $\re(z) \ge 3/2$ become more accurate as $|z|$ increases.  
\vspace{0.25cm}

To concluse this section, we would like to point out that there is no theoretical justification for the above algorithm. One can not be certain that the system  ${\bf A} \vec{\bf x}=\vec{\bf y}$ has a solution or that all roots of polynomial $Q(z)$ in \eqref{eq:tilde_F_rational} are simple (though in all our numerical experiments this was always the case). Even if the algorithm does produce the numbers $\{c_j\}_{1\le j \le p}$ and $\{\lambda_j\}_{1\le j \le p}$, there is no guarantee that this would result in a good approximation 
$\phi(x)=\sum\limits_{j=1}^p c_j e^{-\lambda_j x}$ to $f(x)$ (in the sense of errors 
$\epsilon_1$ and $\epsilon_2$ in \eqref{eq:f_sum_error1} and \eqref{eq:f_sum_error2} being small). Existing results on convergence of Pad\'e approximations to Stieltjes functions are not applicable here since the function $f(x)$ is not completely monotone (thus its Laplace transform is not a Stieltjes function). Lacking any theoretical justification for the above algorithm, in the next section we will try to convince the reader that it works well by presenting numerical evidence.

\section{Numerical results}\label{Section4}

First we would like to make several remarks about implementation of the algorithm presented in Section \ref{Section3}. The algorithm includes two steps that are ill-conditioned and that require high precision arithmetic: (i) solving the system of linear equations ${\bf A} \vec{\bf x}=\vec{\bf y}$ and (ii) finding the partial fraction decomposition of the rational function $\Psi(z)$ (which is equivalent to finding roots of the polynomial $Q(z)$ in \eqref{eq:tilde_F_rational}). Because of this issue, we perform all computations with the precision of 500 digits (we use David Bailey's Fortran module MPFUN).  The inputs of the algorithm are the constants $f^{(n)}(0)$ and $\mu_n$ given in Lemma \ref{Lemma1}: we pre-compute them also to the precision of 500 digits. We used existing tables of Bernoulli numbers (which also give us the values of $\zeta(2n)$) and we computed the values of the $\zeta(2n+1)$ via Euler-Maclaurin summation.

\begin{figure}[t!]
\centering
\subfloat[The graph of $f(x)$]{\label{fig1_p1}\includegraphics[height =6cm]{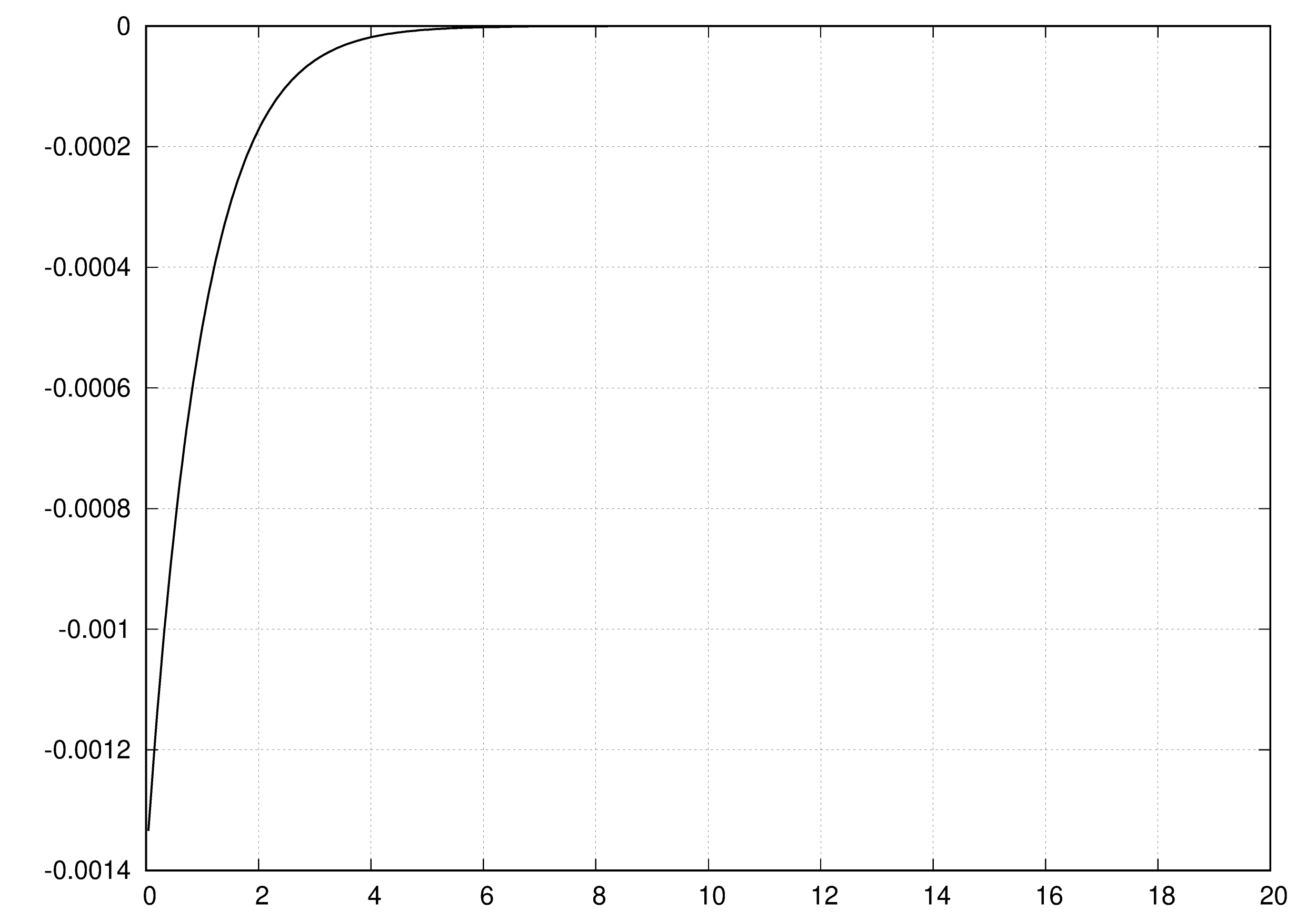}} 
\qquad
\subfloat[The graph of $f(x)-\sum\limits_{j=1}^3 c_j e^{-\lambda_j x}$]{\label{fig1_p2}\includegraphics[height =6cm]{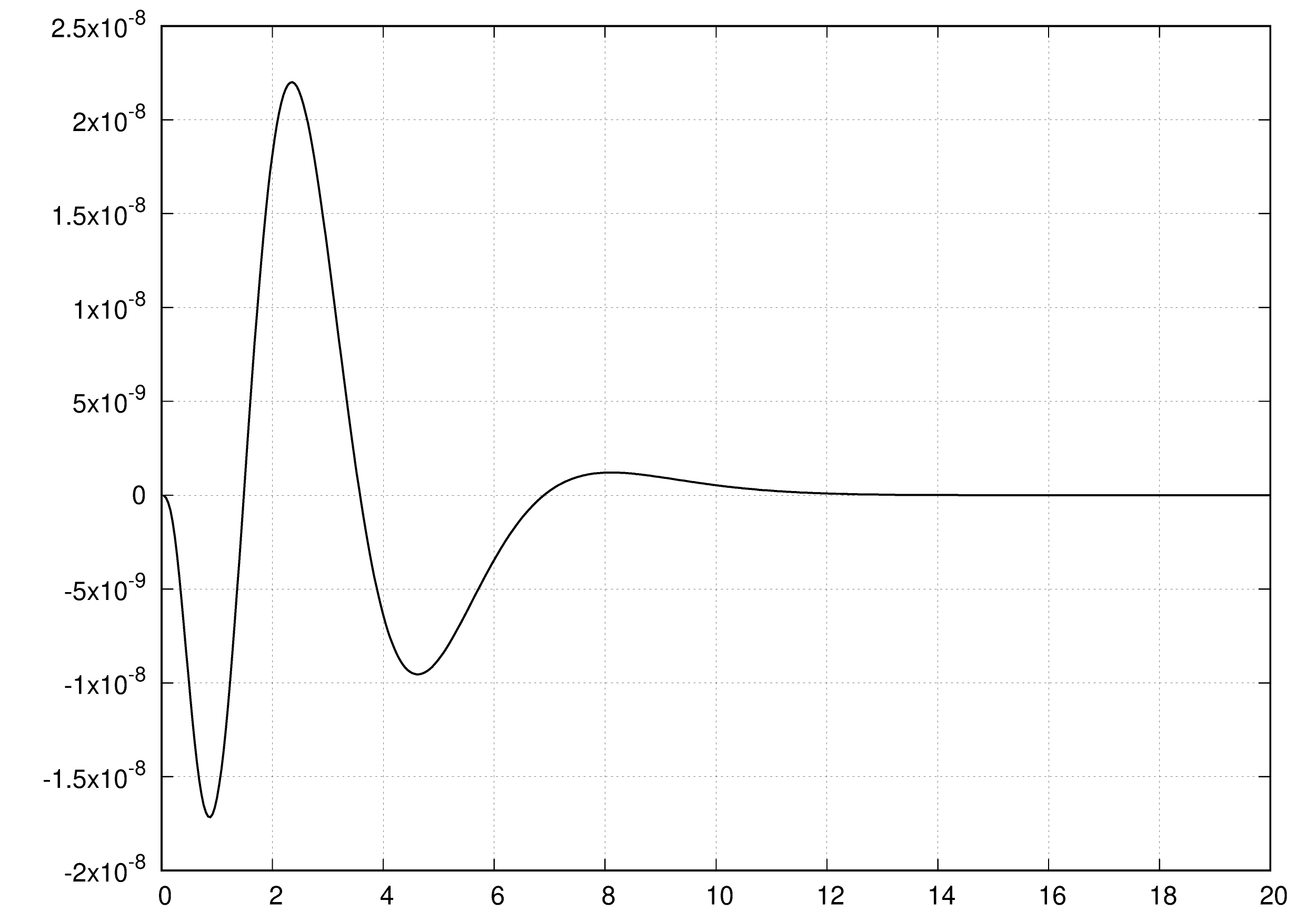}}
\caption{Here $\{\lambda_j,c_j\}_{1\le j \le 3}$ are obtained by a two-point Pad\'e approximation algorithm with $p=N_1=N_2=3$.} 
\label{fig1}
\end{figure}

First we do a test run of our approximation algorithm with $p=N_1=N_2=3$. The two-point Pad\'e approximation algorithm produces the following values of $\{\lambda_i\}_{1\le i \le 3}$ and $\{c_i\}_{1\le i \le 3}$:  
\begin{align*}
\lambda_1&=1.119578657398, \\
\lambda_2&=1.348997960953+ \i \times 0.462842303653, \\
c_1&=-1.501034759773\times 10^{-3}, \\
 c_2&=5.607293544244\times 10^{-5} - \i \times 1.516208003305\times 10^{-4},  \\
 \lambda_3&={\overline{\lambda_2}}, \;\;\; c_3={\overline{c_2}}.
\end{align*}
One can check (numerically) that these numbers indeed satisfy conditions \eqref{eq:tilde_f_moments} and \eqref{eq:tilde_f_derivatives}, thus we have found 
a function $\phi(x)=\sum\limits_{j=1}^3 c_j e^{-\lambda_j x}$ which has the same first three coefficients of Taylor expansion at zero as $f(x)$ and the same first three moments as $f(x)$. 
We present the graphs of $f(x)$ and of the difference $f(x)-\phi(x)$ on Figure \ref{fig1}. The results already look promising: we can approximate $f(x)$ to accuracy of $2.2 \times 10^{-8}$ with a sum of just three exponential functions.

\begin{table}[t!]
\centering
 \begin{tabular}{|c || c c c c c c |} 
 \hline
  & $N_1=0$ & $N_1=1$ & $N_1=2$ & $N_1=3$ & $N_1=4$ & $N_1=5$ \\ [0.5ex] 
 \hline\hline
 $N_2=0$ & & & $4.3\times 10^{-5}$ &  & $4.0 \times 10^{-6}$ &  \\
 $N_2=1$ & & $1.2\times 10^{-5}$
 &  & $6.1\times 10^{-7}$  & & $9.7 \times 10^{-9}$ \\ 
 $N_2=2$ & $1.6\times 10^{-5}$ &   & $4.5 \times 10^{-7}$ & & $1.5 \times 10^{-8}$  & \\
 $N_2=3$ & & $7.2\times 10^{-7}$
  &   & $2.2 \times 10^{-8}$ & & $7.4 \times 10^{-9}$ \\
 $N_2=4$ & $2.3 \times 10^{-6}$ &   & $3.7\times 10^{-8}$ &  & $3.6\times 10^{-9}$ & \\
 $N_2=5$ &  & $1.1 \times 10^{-7}$ &   & $4.2\times 10^{-9}$ & &  $3.5 \times 10^{-10}$ \\ [1ex] 
 \hline
 \end{tabular}
  \caption{The error or approximation $\sup\limits_{x\ge 0} \vert f(x)-\sum\limits_{j=1}^p c_j e^{-\lambda_j x} \vert$ for various values of $N_1$ and $N_2$.}
\label{table:1}
\end{table}

Next we performed a number of similar experiments with all values of $N_1$ and $N_2$ between zero and five (restricted by the requirement that $p=(N_1+N_2)/2$ must be an integer). In each case we computed numerically the $L_{\infty}$ norm 
$$
\sup\limits_{x\ge 0}  \vert f(x)-\sum\limits_{j=1}^p c_j e^{-\lambda_j x} \vert. 
$$
The results of these computations can be seen in Table \ref{table:1}. We see that the accuracy of approximation improves with increasing $p$ or $N_1$ or $N_2$.

\begin{figure}[t!]
\centering
\subfloat[$N_1=22$, $N_2=8$ and $p=15$]{\label{fig2_p1}\includegraphics[height =6cm]{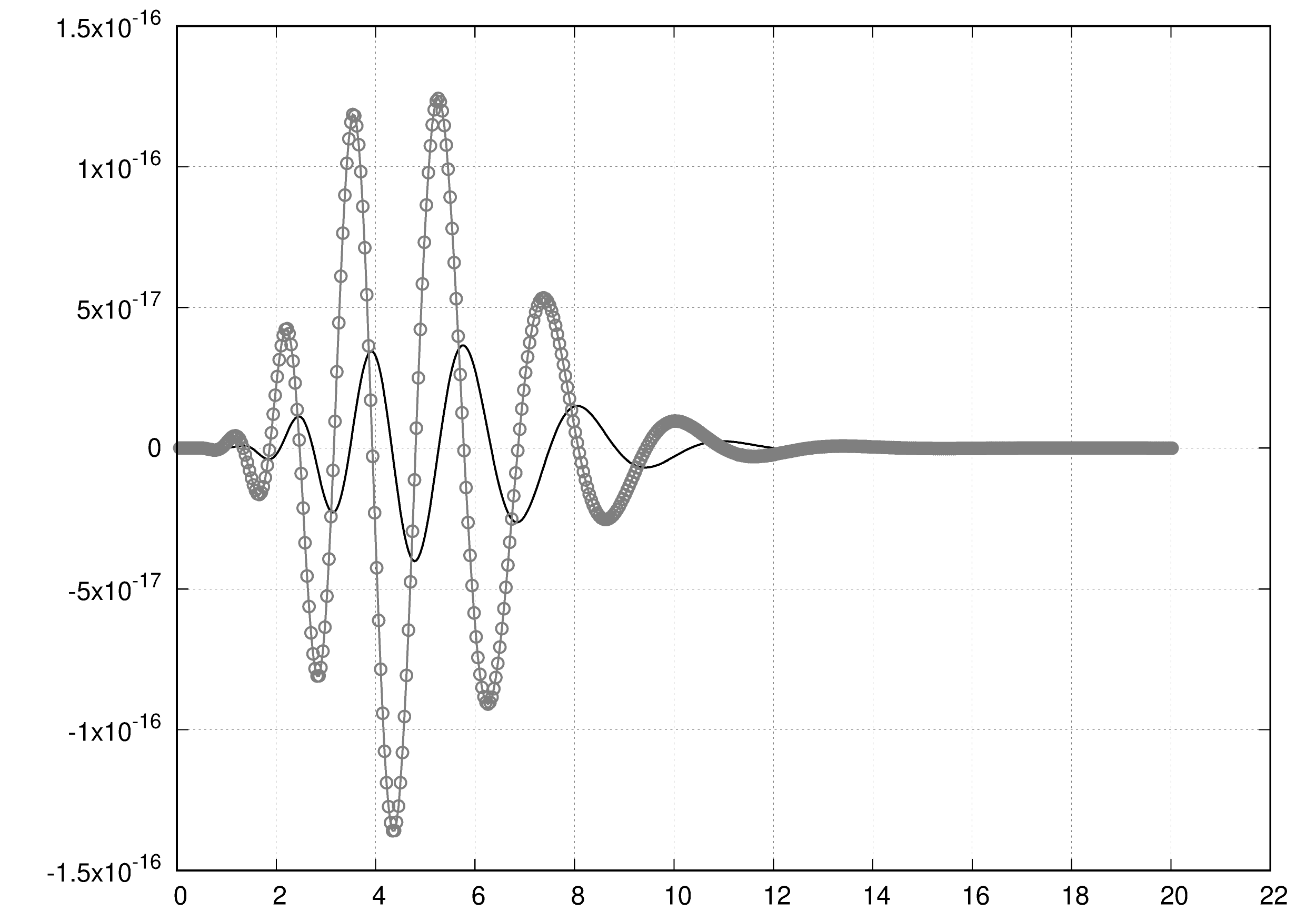}} 
\qquad
\subfloat[$N_1=55$, $N_2=35$ and $p=45$]{\label{fig2_p2}\includegraphics[height =6cm]{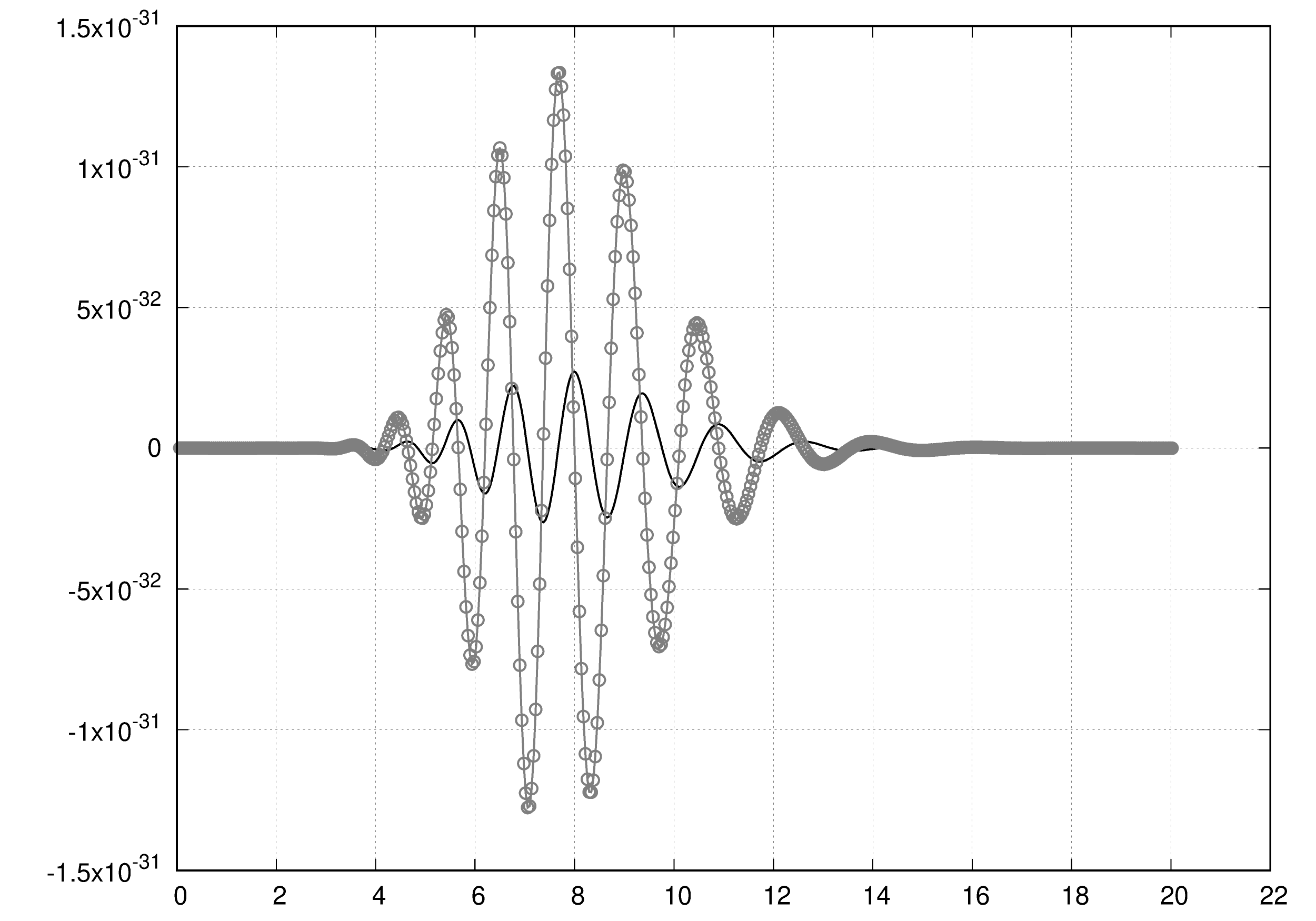}}
\caption{The graphs of the functions $\eta_1(x)$ (black line) and $\eta_2(x)$ (gray line with circles).} 
\label{fig2}
\end{figure}

Our next goal was to find values of $N_1$ and $N_2$ which generate approximations 
$\phi(x)= \sum\limits_{j=1}^p c_j e^{-\lambda_j x}$ to $f(x)$ such that the errors $\epsilon_1$ and $\epsilon_2$ defined in \eqref{eq:f_sum_error1} and 
\eqref{eq:f_sum_error2} are small.  We aimed to keep $p$ as small as possible (so that the number of terms required in the computation of $\Phi(z)$ in \eqref{def:Phi} is small) and to have the errors $\epsilon_1$ and $\epsilon_2$ in the range $(10^{-16}, 10^{-15})$, so that this would be adequate for double precision implementation of approximations to $\ln(\Gamma(z))$ and $\ln(G(z))$. In another experiment we aimed for the range $(10^{-31}, 10^{-30})$, that would be suitable for quadruple precision implementation of $\ln(\Gamma(z))$ and $\ln(G(z))$. By trial and error we found that taking $p=15$, $N_1=22$ and $N_2=8$ produces an approximation $\phi(x)$ with $\epsilon_1<10^{-16}$ and $\epsilon_2<3 \times 10^{-16}$. These upper bonds are not rigorous, rather they were established numerically by examining the graphs of 
\begin{align*}
\eta_1(x)&:=x^2 (f(x)-\phi(x)), \\
\eta_2(x)&:=3x(f(x)-\phi(x))+x^2(f'(x)- \phi'(x)).
\end{align*}
Note that, according to definitions \eqref{eq:f_sum_error1} and 
\eqref{eq:f_sum_error2}, we have 
$$
\epsilon_i= 2 \sup\limits_{x>0} |\eta_i(x)|.
$$
On Figure \ref{fig2_p1} the graph of $\eta_1(x)$ is shown as a solid black line and the graph of $\eta_2(x)$ as a gray line marked with circles.  Figure \ref{fig2_p1} provides convincing evidence that $|\eta_1(x)|<5 \times 10^{-17}$ for all values of $x$, which implies $\epsilon_1<10^{-16}$. The estimate $\epsilon_2<3 \times 10^{-16}$ is established in the same way by inspecting the graph of $\eta_2(x)$. The coefficients $\{\lambda_j\}_{1\le j \le 15}$ and $\{c_j\}_{1\le j \le 15}$ obtained by the two-point Pad\'e algorithm in this case 
$N_1=22$ and $N_2=8$ are presented in Appendix \ref{AppendixB} and can also be downloaded as a list in a text file from the author's \href{https://kuznetsov.mathstats.yorku.ca/code/}{webpage}. Using these coefficients we have implemented in Matlab the approximation algorithm for $\ln(G(z))$ that was described in Section \ref{Section2} -- this Matlab program can also be downloaded from the author's \href{https://kuznetsov.mathstats.yorku.ca/code/}{webpage}. We used this Matlab program  to create the surface plots of real and imaginary part of $\ln(G(z))$ that are shown on Figure \ref{fig3}.

Similarly, when $p=45$, $N_1=55$ and $N_2=35$ we present the graphs of $\eta_1(x)$ and $\eta_2(x)$ in Figure \ref{fig2_p2} and from these graphs we estimate that $\epsilon_1<10^{-31}$ and $\epsilon_2< 3\times 10^{-31}$. The coefficients $\{\lambda_j\}_{1\le j \le 45}$ and $\{c_j\}_{1\le j \le 45}$ corresponding to the case 
$N_1=55$ and $N_2=35$ can be found in the \href{http://arxiv.org/abs/2109.12061}{arXiv} version of the paper and can also be downloaded from the author's \href{https://kuznetsov.mathstats.yorku.ca/code/}{ webpage}. These coefficients may be used to implement quadruple precision approximations to the logarithm of the Barnes $G$-function, giving more than 30 correct digits in the half-plane $\re(z)\ge 3/2$.

\begin{figure}[t!]
\centering
\subfloat[$\re \ln(G(z))$]{\includegraphics[height =6cm]{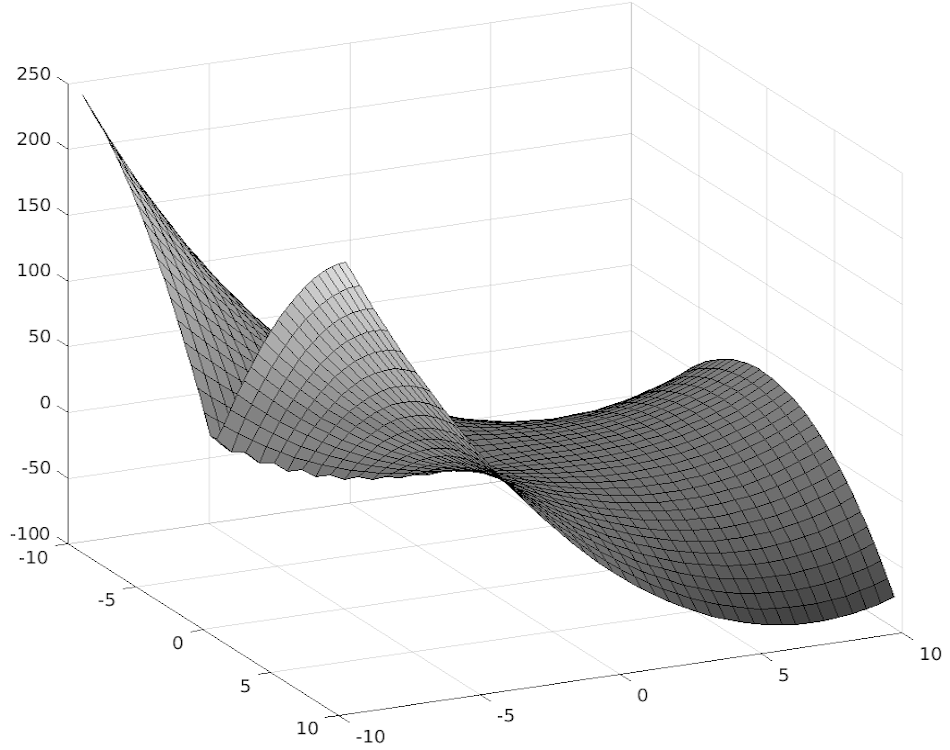}} 
\qquad
\subfloat[$\im \ln(G(z))$]{\includegraphics[height =6cm]{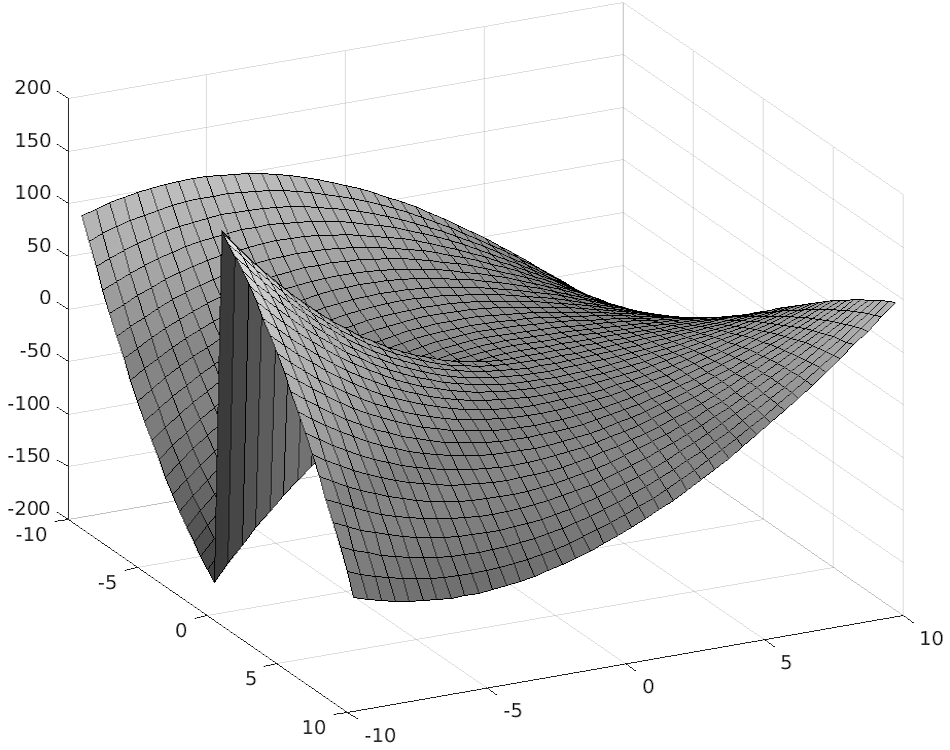}}
\caption{The surface plots of real and imaginary parts of $\ln(G(z))$.} 
\label{fig3}
\end{figure}

%

\section*{Acknowledgements}
Research was supported by the Natural Sciences and Engineering Research Council of Canada. The author would like to thank two anonymous referees for careful reading of the paper and for providing constructive comments and suggestions. 

%

\begin{appendices}

    \setcounter{proposition}{0}
    \renewcommand{\theproposition}{\Alph{section}\arabic{proposition}}
    \setcounter{theorem}{0}
    \renewcommand{\thetheorem}{\Alph{section}\arabic{theorem}}

\section{Computing the dilogarithm function for $|z|\le 1$}\label{AppendixA}
    \renewcommand{\theequation}{\Alph{section}.\arabic{equation}}
 
 We will follow closely the discussion in Section 4 in \cite{Maximon2003}. An efficient method for computing ${\textrm{Li}}_2(z)$ for $|z|\le 1$, which avoids the issue of 
 slow convergence of the series  
 \eqref{eq:def_Li2} when $|z|$ is close to one, can be obtained via the so-called Debye function 
 \begin{equation}\label{eq:def_Debye}
 D(w):=\int_0^w \frac{u \d u}{e^{u}-1}. 
 \end{equation}
 It is easy to see (by substitution $t=1-e^{-u}$) that 
 \begin{equation}\label{eq:Li2_as_D} 
 {\textrm{Li}}_2(z)=D(-\ln(1-z)).
\end{equation} 
  The Debye function has a Taylor series  expansion
 \begin{equation}\label{eq:Debye_Taylor}
 D(w)=w-\frac{w^2}{4}+\sum\limits_{n\ge 1} B_{2n} \frac{w^{2n+1}}{(2n+1)!},
 \end{equation} 
 where $B_n$ are Bernoulli numbers. From the asymptotics of Bernoulli numbers 
$$
|B_{2n}|=2(2n)! (2\pi)^{-2n}(1+o(1)), \;\;\; n\to +\infty
$$ 
 it is easy to deduce that the above Taylor series converges for $|w|< 2\pi$ (this fact also follows by noting that the integrand in 
 \eqref{eq:def_Debye} is analytic in the disk $|u|< 2\pi$, which implies that Debye function $D(w)$ must be analytic in $|w|<2\pi$). 
 
 Now, our algorithm for computing ${\textrm{Li}}_2(z)$ for $|z|\le 1$ consists of the following two steps:
 \begin{itemize}
 \item[(i)] If $z \in {\Omega_1}:=\{ z \in \c \; : \; |z| \le 1, \; \re(z)\le 1/2\}$ then we use identity \eqref{eq:Li2_as_D}  and compute $D(-\ln(1-z))$ via series expansion
 \eqref{eq:Debye_Taylor}.  One can check that for $z\in {\Omega}_1$ we have $|\ln(1-z)|\le \pi/3$, thus the terms in the series  \eqref{eq:Debye_Taylor} decay at the rate $O(6^{-2n})$. 
  \item[(ii)] If $z \in {\Omega_2}:=\{ z \in \c \; : \; |z|\le 1, \; \re(z)> 1/2\}$ then we first use identity (3.3) in \cite{Maximon2003}
  $$
  {\textrm{Li}}_2(z)=-{\textrm{Li}}_2(1-z)+\frac{\pi^2}{6}-\ln(1-z)\ln(z). 
  $$
 If $z\in \Omega_2$ then $1-z \in \Omega_1$, thus ${\textrm{Li}}_2(1-z)$ can be computed by the same approach as described in item (i) above. 
 \end{itemize}

 \section{Coefficients $\lambda_j$ and $c_j$ in the case $p=15$, $N_1=22$ and $N_2=8$}\label{AppendixB}
    \renewcommand{\theequation}{\Alph{section}.\arabic{equation}}

 Here we present values of $\{\lambda_j\}_{1\le j \le 15}$ and $\{c_j\}_{1\le j \le 15}$. The complex values come in conjugate pairs, thus we present only one such value and the other value may be computed by taking conjugate of the previous number (in other words,   $\lambda_{2n+1}=\overline{\lambda_{2n}}$ and $c_{2n+1}=\overline{c_{2n}}$ for 
 $4\le n \le 7$ ). These lists of numbers can also be downloaded as text files from the author's \href{https://kuznetsov.mathstats.yorku.ca/code/}{webpage}.

\begin{align*}
 \lambda_1&=1.015816941860969308,  \;\;\;
 \lambda_2=1.053963061918305102, \\
 \lambda_3&=1.116651540074509609, \;\;\;
 \lambda_4=1.207738507792217625, \\
 \lambda_5&=1.332888622825204091, \;\;\;
 \lambda_6=1.719941572880692604, \\
 \lambda_7&=2.930503690937967271, \\
 \lambda_8&=2.231464874614817990 - \i\times  0.280912039207008020, \\
  \lambda_{10}&=2.639898812086004465 - \i \times 0.873853916915943961, \\
 \lambda_{12}&=2.941124258312725471 - \i \times 1.605727317761697042, \\
 \lambda_{14}&=3.229198135526167105 + \i \times 2.596457178929701727, \\
c_1&=-3.361986110456561101\times 10^{-5}, \;\;\;
c_2=-1.894144561517152089\times 10^{-4}, \\
c_3&=-5.010483210821698243\times 10^{-4}, \;\;\;
c_4=-8.578556468220969250\times 10^{-4},  \\
c_5&=-8.943696088058549902\times 10^{-4}, \;\;\;
c_6=1.854241163038972664\times 10^{-3}, \\
c_7&=-1.918606889602829249\times 10^{-5}, \\
c_{8}&=-3.849191533344471619\times 10^{-4} + \i \times 2.988868248105834482\times 10^{-4}, \\
c_{10}&=1.121264751590328248\times 10^{-5} - \i \times 4.979727219667585924\times 10^{-6}, \\
c_{12}&=-1.113878636296735895\times 10^{-7} - \i \times 9.472403853117676266\times 10^{-8}, \\
c_{14}&=-1.508505417972961883\times 10^{-10}- \i \times 3.899201018438800852\times 10^{-10}.  \\
\end{align*}

 \end{appendices}


\end{document}